\newtheorem{theorem}{Theorem}
\newtheorem{lemma}{Lemma}
\newtheorem{ex}{Example}
\theoremstyle{remark}
\newtheorem{remark}{Remark}
 \theoremstyle{definition}
\newtheorem{definition}{Definition}
\title{Transversality Conditions for Infinite Horizon\\
Variational Problems on Time Scales\thanks{Submitted 6-October-2009;
Accepted 19-March-2010 in revised form;
for publication in \emph{Optimization Letters}.}}
\author{Agnieszka B. Malinowska$^{1, 2}$\\
\texttt{abmalinowska@ua.pt}
\and Nat\'{a}lia Martins$^{1}$\\
\texttt{natalia@ua.pt}
\and Delfim F. M. Torres$^{1}$\\
\texttt{delfim@ua.pt}}
\date{\begin{minipage}[t]{0.45\linewidth}
\centering
$^1$Department of Mathematics\\
University of Aveiro\\
3810-193 Aveiro, Portugal
\end{minipage}
\begin{minipage}[t]{0.45\linewidth}
\centering
$^2$Faculty of Computer Science\\
Bia{\l}ystok University of Technology\\
15-351 Bia\l ystok, Poland
\end{minipage}
}
\begin{document}

\maketitle

\vspace*{-0.4cm}

\begin{abstract}
We consider problems of the calculus of variations
on unbounded time scales. We prove the validity of the
Euler-Lagrange equation on time scales for infinite horizon
problems, and a new transversality condition.

\smallskip

\noindent \textbf{Keywords:} time scales, calculus of variations,
Euler-Lagrange equation, transversality condition, infinite horizon.

\smallskip

\noindent \textbf{Mathematics Subject Classification 2010:}
49K15, 34N05, 39A12.
\end{abstract}


\section{Introduction}

Starting with Ramsey's pioneering work \cite{Ramsey}, infinite
horizon variational and optimal control problems have been widely
used in economics (see, \textrm{e.g.},
\cite{Beavis,Cai1,Cai2,Cai3,Chiang,Dasgupta,G:M:P:01,P:R:K:10}).
On the other hand, economics is a discipline
in which there appears to be many opportunities for applications of time scales
\cite{Ric:Del:JVC,A:B:L:06,Atici:comparison,A:U:08,Basia:post_doc_Aveiro:2}.
Therefore, it is natural to try to relate the subject of time scales
with the study of infinite horizon variational problems.
This is the main motivation and goal of the present work.

The study of improper integrals on time scales
was introduced by Bohner and Guseinov in
\cite{Bohner-Guseinov 2003}. For a more general
treatment see \cite{diamond}. However, the
use of time scale integrals in the calculus of variations
has been, so far, restricted to bounded intervals --
see \cite{B:T:08,BohnerCV04,B:F:T:10,comRui:TS:Lisboa07,M:T:2010,NataliaHigherOrderNabla}
and references therein. In this paper we consider
the infinite horizon problem of maximizing the expression
\begin{equation}\label{int}
\int_{a}^{T} L(t,x^{\sigma}(t), x^{\Delta}(t)) \Delta t
\end{equation}
as $T$ grows to infinity. If $T=+\infty$, then the integral
\eqref{int} does not necessarily converge. It may diverge to plus or
minus infinity or it may oscillate.
In such situations the extension of the definition of
optimality used in the time scale setting (see \cite{BohnerCV04})
to the unbounded time domain is not very useful.
For example, if every admissible function $x$ yields an infinite value for
functional \eqref{int}, then each admissible path could be called an
optimal path. To handle this and similar situations in a rigorous
way, several alternative definitions of optimality for problems with
unbounded time domain have been proposed in the literature (see,
\textrm{e.g.}, \cite{Brock,Gale,SS,Weiz}). In this paper we follow
Brock's notion of optimality. Therefore, our optimality criterion
for the special case $\mathbb{T}=\mathbb{Z}$ coincides with Brock's
notion of weak maximality \cite{Brock}. If $\mathbb{T}=\mathbb{R}$,
our definition of weak maximality coincides with the extension of
Brock's notion of weak maximality to the continuous time situation
\cite{Papageorgiou}. It is worth to point out that in the case where
the functional \eqref{int} converges for all admissible paths, the
weak maximal path is optimal in the sense of the standard
definition of optimality.

Main result of the paper gives necessary conditions of
weak maximality for infinite horizon variational
problems on a generic (unbounded) time scale
(\textrm{cf.} Theorem~\ref{main result}).


\section{Preliminaries}

In this section we introduce basic definitions and results that will
be needed in the sequel. For a more general presentation of the
theory of time scales we refer the reader to the books
\cite{Bohner-Peterson 2001,Bohner-Peterson 2003}.
As usual,  $\mathbb{R}$, $\mathbb{Z}$, and $\mathbb{N}$ denote,
respectively, the set of real, integer, and natural numbers.

A {\it time scale} $\mathbb{T}$ is an arbitrary nonempty closed
subset of  $\mathbb{R}$.  Besides standard cases of $\mathbb{R}$
(continuous time) and $\mathbb{Z}$ (discrete time), many different
models of time are used. For each time scale $\mathbb{T}$ the
following operators are used:

\begin{itemize}
\item the {\it forward jump operator} $\sigma:\mathbb{T} \rightarrow \mathbb{T}$,
$\sigma(t):=\inf\{s \in \mathbb{T}:s>t\}$ for $t\neq\sup \mathbb{T}$
and $\sigma(\sup\mathbb{T})=\sup\mathbb{T}$ if
$\sup\mathbb{T}<+\infty$;

\item the {\it backward jump operator} $\rho:\mathbb{T} \rightarrow \mathbb{T}$,
$\rho(t):=\sup\{s \in \mathbb{T}:s<t\}$ for $t\neq\inf \mathbb{T}$
and $\rho(\inf\mathbb{T})=\inf\mathbb{T}$ if
$\inf\mathbb{T}>-\infty$;

\item the {\it forward graininess function} $\mu:\mathbb{T} \rightarrow [0,\infty[$,
$\mu(t):=\sigma(t)-t$.
\end{itemize}

\begin{ex}
If $\mathbb{T}=\mathbb{R}$, then for any $t \in \mathbb{R}$,
$\sigma(t)=t=\rho(t)$ and $\mu(t) \equiv 0$. If
$\mathbb{T}=\mathbb{Z}$, then for every $t \in \mathbb{Z}$,
$\sigma(t)=t+1$, $\rho(t)=t-1$ and $\mu(t) \equiv 1$.
\end{ex}

A point $t\in\mathbb{T}$ is called \emph{right-dense},
\emph{right-scattered}, \emph{left-dense} and \emph{left-scattered}
if $\sigma(t)=t$, $\sigma(t)>t$, $\rho(t)=t$, and $\rho(t)<t$,
respectively. We say that $t$ is \emph{isolated} if
$\rho(t)<t<\sigma(t)$, that $t$ is \emph{dense} if
$\rho(t)=t=\sigma(t)$.
If $\sup \mathbb{T}$ is finite and left-scattered, we define
$\mathbb{T}^\kappa :=\mathbb{T}\setminus \{\sup\mathbb{T}\}$.
Otherwise, $\mathbb{T}^\kappa :=\mathbb{T}$.

\begin{definition}
Let $f:\mathbb{T} \rightarrow \mathbb{R}$ and $t \in
\mathbb{T}^\kappa$. The {\it delta derivative} of $f$ at $t$ is the
real number $f^{\Delta}(t)$ with the property that given any
$\varepsilon$ there is a neighborhood $U$ of $t$ (\textrm{i.e.},
$U=]t-\delta,t+\delta[\cap\mathbb{T}$ for some $\delta>0$) such that
\[|(f(\sigma(t))-f(s))-f^{\Delta}(t)(\sigma(t)-s)| \leq \varepsilon|\sigma(t)-s|\]
for all $s \in U$. We say that $f$ is {\it delta differentiable} on
$\mathbb{T}$ provided $f^{\Delta}(t)$ exists for all $t \in
\mathbb{T}^\kappa$.
\end{definition}

We shall often denote $f^\Delta(t)$ by $\frac{\Delta}{\Delta t}
f(t)$ if $f$ is a composition of other functions. The delta
derivative of a function $f:\mathbb{T} \rightarrow \mathbb{R}^n$ ($n
\in \mathbb{N}$) is a vector whose components are delta derivatives
of the components of $f$. For $f:\mathbb{T} \rightarrow X$, where
$X$ is an arbitrary set, we define $f^\sigma:=f\circ\sigma$.

For delta differentiable $f$ and $g$, the next formulas hold:

\begin{align*}
f^\sigma(t)&=f(t)+\mu(t)f^\Delta(t) \, ,\\
(fg)^\Delta(t)&=f^\Delta(t)g^\sigma(t)+f(t)g^\Delta(t)\\
&=f^\Delta(t)g(t)+f^\sigma(t)g^\Delta(t).
\end{align*}

\begin{remark}
If $\mathbb{T}=\mathbb{R}$, then $f:\mathbb{R} \rightarrow
\mathbb{R}$ is delta differentiable at $t \in \mathbb{R}$ if and
only if $f$ is differentiable in the ordinary sense at $t$. Then,
$f^{\Delta}(t)=f'(t)$. If $\mathbb{T}=\mathbb{Z}$, then
$f:\mathbb{Z} \rightarrow \mathbb{R}$ is always delta differentiable
at every $t \in \mathbb{Z}$ with $f^{\Delta}(t)=f(t+1)-f(t)$.
\end{remark}

Let $a,b \in \mathbb{T}$, $a<b$. We define the interval $[a,b]$ in
$\mathbb{T}$ by
$$[a,b]:=\{ t \in \mathbb{T}: a\leq t\leq b\}.$$
Open intervals, half-open intervals and unbounded intervals in
$\mathbb{T}$ are defined accordingly.

\begin{definition}
A function $F:\mathbb{T}\rightarrow\mathbb{R}$ is called a
\emph{delta antiderivative} of $f:\mathbb{T}\rightarrow\mathbb{R}$
provided
$$
F^{\Delta}(t)=f(t), \qquad  \forall t \in \mathbb{T}^\kappa.
$$
In this case we define the \emph{delta integral} of $f$ from $a$ to
$b$ ($a,b \in \mathbb{T}$) by
\begin{equation*}
\int_{a}^{b}f(t)\Delta t:=F(b)-F(a) \, .
\end{equation*}
\end{definition}

In order to present a class of functions that possess a delta
antiderivative, the following definition is introduced:

\begin{definition}
A function $f:\mathbb{T} \to \mathbb{R}$ is called
{\emph{rd-continuous}} if it is continuous at the right-dense points
in $\mathbb{T}$ and its left-sided limits exist at all left-dense
points in $\mathbb{T}$. A function $f:\mathbb{T} \to \mathbb{R}^n$
is {\emph{rd-continuous}} if all its components are rd-continuous.
\end{definition}

The set of all rd-continuous functions $f:\mathbb{T} \to
\mathbb{R}^n$ is denoted by $\mathrm{C}_{rd}(\mathbb{T},
\mathbb{R}^n)$, or simply by $\mathrm{C}_{rd}$. Similarly,
$\mathrm{C}^1_{rd}(\mathbb{T}, \mathbb{R}^n)$ and
$\mathrm{C}^1_{rd}$ will denote the set of functions from
$\mathrm{C}_{rd}$ whose delta derivative belongs to
$\mathrm{C}_{rd}$.

\begin{theorem}[\cite{Bohner-Peterson 2001}]
\label{antiderivada} Every rd-continuous function has a delta
antiderivative. In particular, if $a \in \mathbb{T}$, then the
function $F$ defined by
$F(t)= \int_{a}^{t}f(\tau)\Delta\tau$, $t \in \mathbb{T}$,
is a delta antiderivative of $f$.
\end{theorem}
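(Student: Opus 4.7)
My plan is to first observe that the statement splits into two parts, and the ``in particular'' claim is essentially a tautology given the first. Indeed, the integral $\int_a^b f(\tau)\Delta\tau$ has been \emph{defined} as $G(b)-G(a)$ for any antiderivative $G$ of $f$. Once existence of an antiderivative is secured, well-definedness is a routine check that two antiderivatives differ by a (locally) constant function, a consequence of the mean-value-type fact that $H^\Delta\equiv 0$ forces $H$ to be constant on each connected piece of $\mathbb{T}$. Then $F(t)=G(t)-G(a)$ differs from $G$ by a constant, so $F^\Delta=G^\Delta=f$, giving the stated identification.

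The substantive task is therefore producing some antiderivative $G$ on $\mathbb{T}$. I would work first on a compact slice $[a,b]_\mathbb{T}$ and use the dichotomy between right-scattered and right-dense points. Using the identity $g^\sigma(t)=g(t)+\mu(t)g^\Delta(t)$ stated in the excerpt, the requirement $G^\Delta(t)=f(t)$ at a right-scattered $t$ is equivalent to the explicit recursion
\[
G(\sigma(t))=G(t)+\mu(t)f(t),
\]
so the value of $G$ at $\sigma(t)$ is forced once $G(t)$ is known. On any maximal subinterval of right-dense points the delta derivative coincides with the classical derivative (Remark following the definition of delta derivative), and the continuous function $f$ admits a classical Riemann antiderivative there. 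The strategy is to paste these two local constructions together and propagate the value of $G$ forward from a chosen base point.

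The main obstacle is controlling accumulations, for instance clusters of right-scattered points converging from the left to a left-dense point, or right-dense segments pressing against such a point. I would handle this uniformly by approximating $f$ on $[a,b]_\mathbb{T}$, in the supremum norm, by \emph{rd-step functions}, that is, finitely-many-valued rd-continuous functions subordinate to a finite partition. The enabling fact is that an rd-continuous function on a compact time-scale interval is bounded and regulated, hence one-sided limits at left-dense points exist and such a uniform approximation is available via a standard Darboux-type construction. For each rd-step approximant $f_n$, an explicit antiderivative $G_n$ is built by the recursion above combined with affine continuation on right-dense segments. The pointwise bound $|G_n(t)-G_m(t)|\leq(t-a)\,\|f_n-f_m\|_\infty$ yields a uniformly Cauchy sequence $(G_n)$ converging to some $G$, and a case distinction between right-scattered and right-dense $t$ --- using rd-continuity of $f$ at the latter --- confirms $G^\Delta=f$ on $[a,b]_\mathbb{T}^\kappa$. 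Finally, a standard exhaustion of $\mathbb{T}$ by compact intervals, with matching initial data, extends $G$ to all of $\mathbb{T}$.
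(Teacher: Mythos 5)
Your proposal is correct in outline, but note first that the paper offers no proof of this statement: it is quoted as a known preliminary from \cite{Bohner-Peterson 2001}, so the only comparison available is with the proof in that reference, and yours takes a genuinely different route. Bohner and Peterson first prove that every regulated function possesses a \emph{pre-antiderivative} $F$ (so $F^{\Delta}=f$ only off a countable exceptional set of right-dense points) by a local-to-global covering argument, and then use rd-continuity of $f$ to show that $F$ is in fact delta differentiable with derivative $f$ at the exceptional points. You instead transplant the classical regulated-function construction of the integral: uniform approximation by step functions on compact slices, explicit integration of the approximants via the forced recursion $G(\sigma(t))=G(t)+\mu(t)f(t)$ at right-scattered points together with affine continuation elsewhere, a uniform Cauchy estimate, and a direct two-case verification of $G^{\Delta}=f$ for the limit. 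What your route buys is a self-contained, quantitative argument with an explicit error bound; what it costs is that the whole weight now rests on the asserted approximation lemma (rd-continuous on a compact time-scale interval implies bounded, regulated, and a uniform limit of step functions), which needs essentially the same compactness/covering work that the reference spends on pre-antiderivatives, so the hard step is relocated rather than avoided. Three loose ends, none fatal: (i) your $G_n$ need not satisfy $G_n^{\Delta}=f_n$ at a partition point that is dense (left and right slopes may disagree there), and the ``rd-step'' approximants need not be rd-continuous at such points either --- harmless, because your final verification differentiates $G$ directly from the estimate rather than passing derivatives through the limit, but you should not call $G_n$ an antiderivative of $f_n$ without qualification; (ii) the right-dense points of a general time scale (a Cantor-like one, say) need not organize themselves into ``maximal subintervals'' carrying a classical Riemann antiderivative, so the pasting picture you describe before switching to step functions does not literally apply --- though you correctly abandon it; (iii) for the ``in particular'' clause, two antiderivatives differ by a constant on every time-scale interval $[a,b]\cap\mathbb{T}$ by the time-scale mean value inequality, not merely on topologically connected pieces, of which a time scale such as $\mathbb{Z}$ has only singletons.
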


\begin{theorem}[\cite{Bohner-Peterson 2001}]
\label{propriedades nabla integral} If $a,b,c \in \mathbb{T}$, $a
\le c \le b$, $\alpha \in \mathbb{R}$, and $f,g \in
C_{\textrm{rd}}(\mathbb{T}, \mathbb{R})$, then
\begin{enumerate}
\item $\displaystyle \int_{a}^{b}\left(f(t) + g(t) \right)
    \Delta t= \int_{a}^{b}f(t)\Delta t +
    \int_{a}^{b}g(t)\Delta t$;

\item $\displaystyle \int_{a}^{b} \alpha f(t)\Delta t =\alpha
    \int_{a}^{b}f(t)\Delta t$;

\item $\displaystyle \int_{a}^{b}  f(t)\Delta t = -
    \int_{b}^{a} f(t)\Delta t$;

\item $\displaystyle \int_{a}^{a}  f(t)\Delta t=0$;

\item $\displaystyle \int_{a}^{b}  f(t)\Delta t =
    \int_{a}^{c}  f(t)\Delta t + \int_{c}^{b} f(t)\Delta t$;

\item If $f(t)> 0$ for all $a \leq  t< b$, then $
    \displaystyle \int_{a}^{b}  f(t)\Delta t > 0$;

\item $\displaystyle \int_{a}^{b}f^\sigma(t)g^{\Delta}(t)\Delta t
=\left[(fg)(t)\right]_{t=a}^{t=b}-\int_{a}^{b}f^{\Delta}(t)g(t)\Delta
t$;

\item $\displaystyle \int_{a}^{b}f(t)g^{\Delta}(t)\Delta t
=\left[(fg)(t)\right]_{t=a}^{t=b}-\int_{a}^{b}f^{\Delta}(t)g^\sigma(t)\Delta
t$;

\item If $t \in \mathbb{T}^k$, then $\displaystyle \int_{t}^{\sigma(t)}  f(\tau)\Delta \tau =\mu(t)f(t)$.

\end{enumerate}
\end{theorem}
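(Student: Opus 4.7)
The plan is to derive each of the nine items from the defining identity $\int_a^b f\,\Delta t = F(b)-F(a)$, with $F$ a delta antiderivative of $f$, all of which exist by Theorem~\ref{antiderivada} since $f,g$ are rd-continuous.

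Items (1)--(5) and (9) I would dispatch quickly. If $F,G$ are antiderivatives of $f,g$, linearity of the delta derivative makes $\alpha F+\beta G$ an antiderivative of $\alpha f+\beta g$, giving (1) and (2). Items (3) and (4) follow by interchanging or equating the endpoints in $F(b)-F(a)$, and (5) is the telescoping identity $F(b)-F(a)=(F(c)-F(a))+(F(b)-F(c))$. For (9), if $F^\Delta=f$ then $\int_t^{\sigma(t)} f(\tau)\,\Delta\tau = F(\sigma(t))-F(t) = \mu(t)F^\Delta(t) = \mu(t)f(t)$, using the formula $F^\sigma = F + \mu F^\Delta$ recalled in the preliminaries.

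For items (7) and (8) I would apply the two product rules already stated, namely $(fg)^\Delta = f^\Delta g + f^\sigma g^\Delta$ and $(fg)^\Delta = f^\Delta g^\sigma + f g^\Delta$. Integrating each on $[a,b]$ and using that $fg$ is an antiderivative of $(fg)^\Delta$, so that $\int_a^b (fg)^\Delta\,\Delta t = [(fg)(t)]_{t=a}^{t=b}$, the two integration-by-parts formulas fall out after rearrangement.

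The only item requiring real work is (6), and I expect it to be the main obstacle: the time scale can mix right-scattered and right-dense points throughout $[a,b)$, so the proof needs both a discrete and a continuous flavor. My plan is to first invoke the standard time-scale mean-value fact that $F^\Delta\ge 0$ on $\mathbb{T}^\kappa$ implies $F$ is non-decreasing, which applied to an antiderivative of $f$ yields $F(b)\ge F(a)$. To upgrade to a strict inequality, I would split into cases: if some $t_0\in[a,b)$ is right-scattered, item (9) supplies the strict jump $F(\sigma(t_0))-F(t_0)=\mu(t_0)f(t_0)>0$, which combined with monotonicity on the complement gives $F(b)>F(a)$; if instead every point of $[a,b)$ is right-dense, then $[a,b]\cap\mathbb{T}$ coincides with a genuine real interval and the claim reduces to the standard fact that a continuous strictly positive function has strictly positive Riemann integral.
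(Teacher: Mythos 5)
Your proposal is essentially correct, but note that the paper offers no proof of this statement to compare against: Theorem~\ref{propriedades nabla integral} is imported verbatim from Bohner and Peterson's book \cite{Bohner-Peterson 2001} and used as a black box (only items 8 and 9 are actually invoked later, in the proofs of Theorem~\ref{main result} and Lemma~\ref{Fundamental lemma}). Judged on its own merits, your derivation is the standard one: items (1)--(5) and (9) from the antiderivative definition together with $F^\sigma=F+\mu F^\Delta$, and items (7)--(8) by integrating the two forms of the product rule, which is exactly how these are established in the cited reference. You correctly identify item (6) as the only delicate point, and your case split is sound: if every point of $[a,b)\cap\mathbb{T}$ is right-dense then $\mathbb{T}$ has no gaps in $[a,b]$ (a gap would force its left endpoint to be right-scattered), so the delta integral reduces to a Riemann integral of a function that rd-continuity makes continuous there; otherwise a right-scattered $t_0\in[a,b)$ contributes the strict jump $\mu(t_0)f(t_0)>0$ on top of the monotonicity of $F$. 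Two small caveats worth recording: items (7)--(8) tacitly require $f,g\in C^1_{rd}$ rather than merely $C_{rd}$ as the statement reads (a defect of the statement, not of your argument), and the monotonicity step ``$F^\Delta\ge 0$ implies $F$ non-decreasing'' is itself a time-scale mean-value consequence that you invoke rather than prove, which is reasonable at this level of the exposition.
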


\begin{definition} If $a \in \mathbb{T}$,
$\sup \mathbb{T}=+\infty$ and $f \in C_{rd}([a,+\infty[, \mathbb{R})$,
then we define the improper delta integral by
$$
\int_{a}^{+\infty} f(t) \Delta t := \lim_{b\rightarrow+\infty}
\int_a^b f(t) \Delta t
$$
provided this limits exists (in $\overline{\mathbb{R}}
:=\mathbb{R}\cup\{-\infty, +\infty\}$). We say that the improper delta integral
converges if this limit is finite; otherwise, we say that the
improper delta integral diverges.
\end{definition}
In \cite{Bohner-Guseinov 2003} the reader may find many examples and
results involving delta improper integrals.

The following result will be very useful in the proof of our main
result (Theorem~\ref{main result}).

\begin{theorem}[\cite{Serge Lang}]
\label{Serge Lang}
Let $S$ and $T$ be subsets of a  normed vector space. Let $f$ be a
map defined on $T \times S$, having values in some complete normed
vector space. Let $v$ be adherent to $S$ and $w$ adherent to $T$.
Assume that:
\begin{enumerate}
\item $\lim_{x\rightarrow v} f(t,x)$ exists for each $t \in T$;

\item $\lim_{t\rightarrow w} f(t,x)$ exists uniformly for  $x \in S$.
\end{enumerate}
Then the limits
$\lim_{t\rightarrow w} \lim_{x\rightarrow v} f(t,x)$,
$\lim_{x\rightarrow v} \lim_{t\rightarrow w}f(t,x)$,
and $\lim_{(t,x)\rightarrow (v,w)} f(t,x)$
all exist and are equal.
\end{theorem}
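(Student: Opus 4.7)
The plan is to follow the classical Moore--Osgood argument, exploiting completeness of the target space together with the Cauchy criterion. First I would label the two candidate iterated limits: set $g(t) := \lim_{x \to v} f(t,x)$, which is defined on all of $T$ by hypothesis~(1), and $h(x) := \lim_{t \to w} f(t,x)$, which is defined on $S$ by hypothesis~(2) with the extra feature that the convergence is uniform in $x$. To produce $\lim_{t \to w} g(t)$, note that uniform convergence in~(2) gives a uniform Cauchy condition: for every $\varepsilon > 0$ there is a neighborhood $W$ of $w$ with $\|f(t_1,x) - f(t_2,x)\| < \varepsilon$ for all $t_1,t_2 \in W \cap T$ and all $x \in S$. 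Passing $x \to v$ and using~(1) on each side yields $\|g(t_1) - g(t_2)\| \le \varepsilon$, so $g$ is Cauchy at $w$; completeness of the target then supplies a limit, call it $L$.

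Next I would show $\lim_{(t,x) \to (w,v)} f(t,x) = L$, which is the heart of the argument. Fix $\varepsilon > 0$, shrink $W$ to a neighborhood $W_1$ so the uniform Cauchy condition holds with tolerance $\varepsilon/3$, and further choose $W_2 \subset W_1$ so that $\|g(t_0) - L\| < \varepsilon/3$ for all $t_0 \in W_2 \cap T$. Since $w$ is adherent to $T$, pick one such $t_0$. Applying~(1) at this \emph{single} point, there is a neighborhood $V$ of $v$ with $\|f(t_0,x) - g(t_0)\| < \varepsilon/3$ for $x \in V \cap S$. Then for all $(t,x) \in (W_1 \cap T) \times (V \cap S)$ the triangle inequality gives
\[
\|f(t,x) - L\| \le \|f(t,x) - f(t_0,x)\| + \|f(t_0,x) - g(t_0)\| + \|g(t_0) - L\| < \varepsilon,
\]
which is the asserted joint limit.

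With the joint limit in hand, the remaining iterated limit follows immediately: for each $x \in V \cap S$, letting $t \to w$ inside $W_1 \cap T$ in the estimate above yields $\|h(x) - L\| \le \varepsilon$, so $\lim_{x \to v} h(x) = L$ as well, and all three limits coincide. The main obstacle is the three-term splitting in the middle step: it must simultaneously exploit uniformity in $x$ (available only from hypothesis~(2), controlling $\|f(t,x) - f(t_0,x)\|$ for \emph{all} $x$) and pointwise convergence in $x$ (from hypothesis~(1), controlling $\|f(t_0,x) - g(t_0)\|$ at a \emph{single} pre-chosen $t_0$). The pivotal move is to fix $t_0$ first and only then pick the neighborhood $V$; the reverse order would require a uniformity in $t$ of the limit in~(1), which is not assumed.
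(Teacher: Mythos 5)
Your proof is correct: it is the standard Moore--Osgood interchange-of-limits argument (uniform Cauchy condition in $t$, completeness to produce $L=\lim_{t\to w}g(t)$, then the three-term splitting with a single pre-chosen $t_0$ to get the joint limit and finally $\lim_{x\to v}h(x)=L$), and the order of quantifiers you emphasize at the end is exactly the delicate point. The paper itself gives no proof of this statement --- it is quoted verbatim from Lang's \emph{Undergraduate Analysis} --- and your argument is essentially the one found there, so there is nothing to contrast.
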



\section{Main results}

In this section we assume that $\mathbb{T}$ is a time scale
such that $\sup \mathbb{T} = +\infty$. In what follows we will
suppose that $a, T, T^\prime\in \mathbb{T}$ are such that $T > a$
and $T^\prime >a$. By $\partial_2 L$ and
$\partial_3 L$ we denote, respectively, the partial
derivative of $L(\cdot,\cdot,\cdot)$ with respect to its second and third
argument. Let us consider the following variational problem
on $\mathbb{T}$:

\begin{equation}
\label{problem}
\begin{gathered}
\int_{a}^{+\infty} L(t,x^{\sigma}(t), x^{\Delta}(t)) \Delta t  \longrightarrow \max \\
x \in C^{1}_{rd}(\mathbb{T},\mathbb{R}^n)\\
x(a)=x_a
\end{gathered}
\end{equation}
where $(u,v)\rightarrow L(t,u,v)$ is a $C^1(\mathbb{R}^{2n}, \mathbb{R})$ function for any $t \in \mathbb{T}$,
$\partial_3 L(t, x^\sigma(t), x^\Delta (t))$ is delta differentiable for all $x \in C^{1}_{rd}(\mathbb{T},\mathbb{R}^n)$,
$n \in \mathbb{N}$, and $x_a \in \mathbb{R}^n$.

\begin{definition}
We say that $x$ is an admissible path for problem (\ref{problem}) if
and only if $x \in C^{1}_{rd}(\mathbb{T},\mathbb{R}^n)$ and
$x(a)=x_a$.
\end{definition}

We use the following notion as our
optimality criteria.

\begin{definition}[weak maximality]
\label{def:weakMax}
We say that $x_{\ast}$ is weakly maximal to problem
(\ref{problem}) if and only if $x_{\ast}$ is an admissible path and
$$
\lim_{T\rightarrow+\infty}\inf_{T^\prime \geq
T}\int_{a}^{T^\prime}[L(t, x^{\sigma}(t), x^{\Delta}(t))- L(t,
x_{\ast}^{\sigma}(t), x_{\ast}^{\Delta}(t)]\Delta t \le 0
$$
for all admissible path $x$.
\end{definition}

\begin{lemma}
\label{Fundamental lemma}
Let $g \in C_{rd}(\mathbb{T}, \mathbb{R})$.
Then,
$$
\lim_{T\rightarrow+\infty}\inf_{T^\prime \geq T}\int_{a}^{T^\prime} g(t) \eta^{\sigma}(t) \Delta t=0
\ \ \ \mbox{for all} \ \ \eta \in C_{rd}(\mathbb{T}, \mathbb{R}) \ \ \ \mbox{such that} \ \  \eta(a)=0
$$
if and only if $g(t)=0$ on $[a, +\infty[$.
\end{lemma}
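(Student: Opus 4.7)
The ``if'' direction is immediate: if $g\equiv 0$ on $[a,+\infty[$, the integrand vanishes identically for every admissible $\eta$, so the iterated limit is $0$.

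For the ``only if'' direction I argue by contrapositive. Suppose $g(t_0)\neq 0$ for some $t_0\in[a,+\infty[$; without loss of generality $g(t_0)>0$. The plan is to construct $\eta\in C_{rd}(\mathbb{T},\mathbb{R})$ with $\eta(a)=0$ and $\eta$ vanishing outside a bounded subset of $\mathbb{T}$, in such a way that
\[
I:=\int_a^{+\infty}g(t)\eta^\sigma(t)\,\Delta t>0.
\]
Because $\eta^\sigma$ then also vanishes outside a bounded set, the partial integrals $\int_a^{T'}g(t)\eta^\sigma(t)\Delta t$ are identically equal to $I$ for all $T'$ sufficiently large, so $\lim_{T\to+\infty}\inf_{T'\geq T}\int_a^{T'}g(t)\eta^\sigma(t)\Delta t=I>0$, contradicting the hypothesis.

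The construction of $\eta$ splits according to the local structure of $\mathbb{T}$ at $t_0$. If $t_0$ is right-dense, rd-continuity of $g$ at $t_0$ yields a two-sided neighbourhood of $t_0$ in $\mathbb{T}$ on which $g>g(t_0)/2>0$; a continuous non-negative ``tent'' $\eta$ supported in a small sub-interval containing $t_0$ (and with $\eta(a)=0$) then satisfies $g\,\eta^\sigma\geq 0$ with strict inequality on a half-open sub-interval, and item~6 of Theorem~\ref{propriedades nabla integral} gives $I>0$. If $t_0$ is right-scattered and $\sigma(t_0)$ is also right-scattered (so $\sigma(t_0)$ is isolated), the spike $\eta:=g(t_0)\,\chi_{\{\sigma(t_0)\}}$ is rd-continuous, satisfies $\eta(a)=0$, and has $\eta^\sigma=g(t_0)\,\chi_{\{t_0\}}$; item~9 of Theorem~\ref{propriedades nabla integral} then yields $I=\mu(t_0)g(t_0)^2>0$.

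The most delicate sub-case is $t_0$ right-scattered with $\sigma(t_0)$ right-dense, where the spike is no longer rd-continuous at $\sigma(t_0)$. I would handle it by either reducing to the right-dense case at $t_0':=\sigma(t_0)$ (when $g(\sigma(t_0))\neq 0$), or, when $g(\sigma(t_0))=0$, replacing the spike by a narrow continuous tent peaking at $\sigma(t_0)$ with value $g(t_0)$ and using rd-continuity of $g$ at the right-dense point $\sigma(t_0)$ to force the ``right-dense tail'' contribution to be smaller in absolute value than $\mu(t_0)g(t_0)^2$, keeping $I$ positive. This case analysis across the different possible local structures of $\mathbb{T}$ at $t_0$ is the main technical obstacle.
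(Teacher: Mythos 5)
Your proposal is correct and, for most of the case analysis, runs parallel to the paper's proof: both argue by contradiction from a point $t_0$ with $g(t_0)>0$, both use a continuous tent supported near $t_0$ when $t_0$ is right-dense, both use the spike $\eta=g(t_0)\chi_{\{\sigma(t_0)\}}$ when $t_0$ and $\sigma(t_0)$ are both right-scattered, and both reduce to the right-dense case at $\sigma(t_0)$ when $\sigma(t_0)$ is right-dense with $g(\sigma(t_0))\neq 0$. The genuine difference is in the last sub-case ($t_0$ right-scattered, $\sigma(t_0)$ right-dense, $g(\sigma(t_0))=0$). The paper splits this further into the dichotomy ``$g\equiv 0$ on some $[\sigma(t_0),t_3]$'' versus ``$g$ does not vanish identically on any such interval''; the first branch uses a tent grafted onto the spike (so that the tail contributes nothing), while the second branch extracts a decreasing sequence $s_k\downarrow\sigma(t_0)$ with $g(s_k)\neq 0$ and restarts the case analysis at some $s_j$ -- the least transparent step of the published argument. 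Your quantitative version handles both branches at once: a tent of height $g(t_0)$ peaking at $\sigma(t_0)$ and supported on $[\sigma(t_0),t_3]$ is rd-continuous (continuity at the left-scattered, right-dense point $\sigma(t_0)$ only constrains the right-hand limit), contributes exactly $\mu(t_0)g(t_0)^2$ over $[t_0,\sigma(t_0)]$, and -- since $g$ is continuous at the right-dense point $\sigma(t_0)$ with $g(\sigma(t_0))=0$ -- the tail $\int_{\sigma(t_0)}^{t_3}g\,\eta^\sigma\,\Delta t$ is bounded by $g(t_0)\,(t_3-\sigma(t_0))\sup_{[\sigma(t_0),t_3]}|g|$, which can be forced below $\mu(t_0)g(t_0)^2$ by shrinking $t_3$. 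This buys a shorter, non-recursive argument; what it costs is that the final sub-case is only sketched in your write-up (``I would handle it by\dots''), so to be a complete proof you should record the explicit choice of $t_3$ via the $\varepsilon$--$\delta$ continuity of $g$ at $\sigma(t_0)$ and the resulting inequality $I\geq\mu(t_0)g(t_0)^2-g(t_0)(t_3-\sigma(t_0))\sup|g|>0$. Your observation that the compactly supported $\eta$ makes the partial integrals eventually constant, so that $\lim_{T\to+\infty}\inf_{T'\geq T}$ of them equals the positive value $I$, is exactly the point the paper uses implicitly in each case and is stated more cleanly in your version.
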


\begin{proof}
The implication "$\Leftarrow$" is obvious.
Let us prove the implication "$\Rightarrow$". Suppose, by contradiction,
that $g(t)\not \equiv 0$. Let $t_0$ be a
point on  $[a, +\infty[$ such that $g(t_0)\neq 0$; suppose, without
loss of generality, that $g(t_0)>0$.

\emph{Case I}. If $t_0$ is right-dense, then $g$ is also positive in
$[t_0, t_1]$ for some $t_1>t_0$.
Define
$$\eta(t)= \left\{
\begin{array}{lcl}
(t-t_0)(t_1-t)  & & t \in [t_0,t_1]\\
0 & & \mbox{otherwise}\, .
\end{array} \right.
$$
Then $$\lim_{T\rightarrow+\infty}\inf_{T^\prime \geq
T}\int_{a}^{T^\prime} g(t) \eta^{\sigma}(t) \Delta t=
\int_{t_0}^{t_1}g(t) \eta^{\sigma}(t) \Delta t >0$$
which is a contradiction.

\emph{Case II}. Suppose that $t_0$ is right-scattered.
\begin{enumerate}
\item If $\sigma(t_0)$ is right-scattered, define
$$\eta(t)= \left\{
\begin{array}{lcl}
g(t_0)  & & t=\sigma(t_0)\\
0 & & \mbox{otherwise}
\end{array} \right..
$$
Then
$$\lim_{T\rightarrow+\infty}\inf_{T^\prime \geq T}\int_{a}^{T^\prime} g(t) \eta^{\sigma}(t) \Delta t=
\int_{t_0}^{\sigma(t_0)}g(t) \eta^{\sigma}(t) \Delta t =
\mu(t_0)g(t_0)g(t_0)>0$$
which is a contradiction.

\item Suppose that $\sigma(t_0)$ is right-dense. Two situations may occur:
\begin{enumerate}
\item $g(\sigma(t_0))\neq 0$;

\item $g(\sigma(t_0))= 0$.
\end{enumerate}
In case $(a)$, we can assume, without loss of generality, that
$g(\sigma(t_0))>0$. Then $g$ is also positive in $[\sigma(t_0),
t_2]$ for some $t_2 > \sigma(t_0)$.
Define
$$\eta(t)= \left\{
\begin{array}{lcl}
(t-\sigma(t_0))(t_2-t)  & & t \in [\sigma(t_0),t_2]\\
0 & & \mbox{otherwise}
\end{array} \right..
$$
In this case
$$\lim_{T\rightarrow+\infty}\inf_{T^\prime \geq T}\int_{a}^{T^\prime} g(t) \eta^{\sigma}(t) \Delta t=
\int_{\sigma(t_0)}^{t_2}g(t) \eta^{\sigma}(t) \Delta t >0$$
which is a contradiction.

Suppose we are in case $(b)$. Two situations may happen:

$(i)$ $g(t)=0$ on $[\sigma(t_0),t_3]$ for some $t_3 > \sigma(t_0);$

$(ii)$ $\forall t_3 > \sigma(t_0) \  \exists t \in [\sigma(t_0), t_3] \ g(t)\neq 0$.

In case $(i)$ define

$$\eta(t)= \left\{
\begin{array}{lcl}
g(t_0)  & & t = \sigma(t_0)\\
\varphi(t) & & t \in ]\sigma(t_0), t_3]\\
0 & & \mbox{otherwise}
\end{array} \right.
$$
for some function  $\varphi \in C_{rd}$ satisfying the conditions
$\varphi(t_3)=0$ and $\varphi(\sigma(t_0))=g(t_0)$.
It follows that
$$\lim_{T\rightarrow+\infty}\inf_{T^\prime \geq T}\int_{a}^{T^\prime} g(t) \eta^{\sigma}(t) \Delta t=
\int_{t_0}^{\sigma(t_0)}g(t) \eta^{\sigma}(t) \Delta t=
\mu(t_0)g(t_0)g(t_0) >0$$
which is a contradiction.

Suppose we are in case $(ii)$. Since $\sigma(t_0)$ is right-dense,  there exists a strictly
decreasing sequence $S=\{s_{k}: k\in \mathbb{N}\}\subseteq \mathbb{T}$
such that  $\lim_{k \rightarrow \infty}s_{k}=\sigma(t_0)$ and
$g(s_k)\neq 0,  \ \forall k \in \mathbb{N}$. If there exists
a right-dense $s_{k}$, then go to
\emph{Case I} with $t_0:=s_{k}$ (and we get a contradiction).
If all points of the sequence are right-scattered, then go to Case II
with $t_0:=s_j$ for some $j \in \mathbb{N}$. Since $\sigma(t_0)$
is right-scattered, we are in situation 1 and we obtain a contradiction.
\end{enumerate}
Therefore, we may conclude that $g\equiv 0$ on $[a,+\infty[$.
\end{proof}

\begin{theorem}
\label{main result}
Suppose that the optimal path to problem (\ref{problem}) exists and
is given by $x_{\ast}$. Let $p \in
C^1_{rd}(\mathbb{T},\mathbb{R}^n)$ be such that $p(a)=0$. Define
$$
\begin{array}{lcl}
 A(\varepsilon, T^\prime)& := & \displaystyle \int_{a}^{T^\prime}
\frac{L(t, x_{\ast}^{\sigma}(t) + \varepsilon p^{\sigma}(t),
x_{\ast}^{\Delta}(t) + \varepsilon p^{\Delta}(t))
- L(t, x_{\ast}^{\sigma}(t), x_{\ast}^{\Delta}(t))}{\varepsilon} \Delta t\\
& & \\
V(\varepsilon, T)& := & \displaystyle \inf_{T^\prime \geq
T}\int_{a}^{T^\prime} \left [L(t, x_{\ast}^{\sigma}(t) + \varepsilon
p^{\sigma}(t), x_{\ast}^{\Delta}(t) + \varepsilon p^{\Delta}(t))
- L(t, x_{\ast}^{\sigma}(t), x_{\ast}^{\Delta}(t))\right] \Delta t\\
& & \\
V(\varepsilon)&:= & \displaystyle \lim_{T\rightarrow+\infty}
V(\varepsilon, T).
\end{array}
$$

Suppose that

\begin{enumerate}

\item $\displaystyle \lim_{\varepsilon \rightarrow 0} \frac{V(\varepsilon, T) }{\varepsilon}$ exists for all $T$;

\item $\displaystyle \lim_{T\rightarrow+\infty}\frac{V(\varepsilon, T) }{\varepsilon}$ exists uniformly for $\varepsilon$;

\item For every $T^\prime > a$, $T > a$,
and $\varepsilon\in \mathbb{R}\setminus\{0\}$,
there exists a sequence $\left(A(\varepsilon, T^\prime_n)\right)_{n \in \mathbb{N}}$
such that
$$
\displaystyle \lim_{n \rightarrow +\infty} A(\varepsilon, T^\prime_n)
= \displaystyle \inf_{T^\prime \geq T} A(\varepsilon, T^\prime)
$$
uniformly for $\varepsilon$.
\end{enumerate}
Then $x_{\ast}$ satisfies the Euler-Lagrange equation
\begin{equation}
\label{E-L equation}
\frac{\Delta}{\Delta t} \partial_3 L (t, x^{\sigma}(t),
x^{\Delta}(t))= \partial_2 L (t, x^{\sigma}(t), x^{\Delta}(t) ), \ \
\ \ \ \forall t \in [a, +\infty[
\end{equation}
and the tranversality condition
\begin{equation}
\label{tranversality}
\displaystyle \lim_{T\rightarrow+\infty} \inf_{T^\prime \geq T}
\partial_3 L (T^\prime, x^{\sigma}(T^\prime), x^{\Delta}(T^\prime)) x(T^\prime)=0.
\end{equation}
\end{theorem}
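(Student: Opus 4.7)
The plan is to use the classical variational approach, adapted to the weak-maximality setting. I would embed $x_\ast$ in the one-parameter family of admissible paths $x_\ast + \varepsilon p$ where $p \in C^1_{rd}(\mathbb{T},\mathbb{R}^n)$ satisfies $p(a)=0$, extract a first-order condition at $\varepsilon=0$, integrate by parts on the time scale, and then decouple the Euler-Lagrange equation from the transversality condition by choosing appropriate test variations.

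The entry point is Definition \ref{def:weakMax} applied to $x_\ast + \varepsilon p$: since $x_\ast$ is weakly maximal, $V(\varepsilon)\le 0$ for every real $\varepsilon$, while $V(0)=0$ trivially. Hence $V$ attains a global maximum at $0$, and whenever the derivative $\lim_{\varepsilon\to 0} V(\varepsilon)/\varepsilon$ exists it must equal zero. The bulk of the work is computing this derivative by pushing $\lim_{\varepsilon \to 0}$ through $\lim_{T\to\infty}$ and through $\inf_{T'\ge T}$. Both exchanges are handled by Theorem \ref{Serge Lang}: hypotheses 1 and 2 license swapping the $\varepsilon$-limit with the outer $T$-limit, while hypothesis 3 (expressing the infimum as a uniform-in-$\varepsilon$ sequential limit $A(\varepsilon,T'_n)$) licenses swapping the $\varepsilon$-limit with the inner infimum. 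The innermost pointwise limit $\lim_{\varepsilon\to 0} A(\varepsilon,T')$ is the standard directional derivative, equal to $\int_a^{T'}\bigl[\partial_2 L\cdot p^\sigma + \partial_3 L\cdot p^\Delta\bigr]\Delta t$ evaluated along $x_\ast$.

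Next, integrating by parts via item 8 of Theorem \ref{propriedades nabla integral} and using $p(a)=0$, the stationarity identity becomes
$$0 = \lim_{T\to\infty}\inf_{T'\ge T}\left\{\int_a^{T'}\left[\partial_2 L - \frac{\Delta}{\Delta t}\partial_3 L\right]p^\sigma(t)\,\Delta t + \partial_3 L(T')\,p(T')\right\}.$$
To isolate the Euler-Lagrange equation I restrict to compactly supported variations: if $p\equiv 0$ on $[T_0,+\infty[$, the boundary term disappears and the integral is constant in $T'$ for $T'\ge T_0$, producing $\int_a^{T_0}\bigl[\partial_2 L - \frac{\Delta}{\Delta t}\partial_3 L\bigr]p^\sigma\,\Delta t = 0$ for every such $p$. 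Lemma \ref{Fundamental lemma} then forces \eqref{E-L equation} on $[a,T_0]$, and since $T_0$ is arbitrary the equation extends to $[a,+\infty[$. Once the Euler-Lagrange equation holds, the integral contribution vanishes identically in $p$, leaving $\lim_{T\to\infty}\inf_{T'\ge T}\partial_3 L(T')\,p(T')=0$ for every admissible variation; a variation agreeing with the optimal path on the tail then yields \eqref{tranversality}.

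The principal obstacle is rigorously interchanging the $\varepsilon$-derivative with the outer limit and with the inner infimum, since the inner operation is not a limit in the ordinary sense; hypotheses 2 and 3 are tailored precisely to invoke Theorem \ref{Serge Lang} in each case. A secondary subtlety is that Lemma \ref{Fundamental lemma} is stated for $\eta\in C_{rd}$, whereas our test variations live in $C^1_{rd}$, but the piecewise polynomial and step-like test functions constructed in the proof of the lemma admit $C^1_{rd}$ analogues that preserve the sign argument, so the conclusion still applies in our setting.
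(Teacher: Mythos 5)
Your proposal is correct and follows essentially the same route as the paper: first variation of $V$ at $\varepsilon=0$, double limit interchange via Theorem~\ref{Serge Lang} under hypotheses 1--3, integration by parts (item 8 of Theorem~\ref{propriedades nabla integral}), decoupling with variations vanishing at the endpoint to get \eqref{E-L equation} via Lemma~\ref{Fundamental lemma}, and then a variation proportional to $x_\ast$ on the tail to get \eqref{tranversality}. Your explicit use of compactly supported variations and your remark on the $C_{rd}$ versus $C^1_{rd}$ mismatch in Lemma~\ref{Fundamental lemma} are slightly more careful renderings of steps the paper treats informally, not a different method.
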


\begin{remark}
Similarly to the classical context $\mathbb{T} = \mathbb{R}$ \cite{Nitta et all},
hypotheses 1, 2, and 3 of Theorem~\ref{main result} are impossible to be
verified \emph{a priori} because $x_{\ast}$ is unknown. In practical terms
such hypotheses are assumed to be true and conditions
\eqref{E-L equation} and \eqref{tranversality} are applied heuristically
to obtain a \emph{candidate}. If such a candidate is, or not, a solution
to the problem is a different question that always require further analysis
(see Examples~\ref{ex:neg} and \ref{ex:pos}).
\end{remark}

\begin{proof}
Using our notion of weak maximality, if $x_{\ast}$ is optimal, then
$V(\varepsilon) \leq 0$ for every $\varepsilon \in \mathbb{R}$.
Since $V(0)=0$, then 0 is an extremal point of $V$.
If $V$ is differentiable at $t=0$, we may conclude that
$V^\prime(0)=0$.
We now note that
$$
\begin{array}{lcl}
V^\prime(0) & = & \displaystyle
\lim_{\varepsilon \rightarrow 0} \frac{V(\varepsilon)}{\varepsilon}
= \displaystyle \lim_{\varepsilon \rightarrow 0}
\lim_{T\rightarrow+\infty}\frac{V(\varepsilon, T) }{\varepsilon}\\
& = & \displaystyle  \lim_{T\rightarrow+\infty}
\lim_{\varepsilon \rightarrow 0} \frac{V(\varepsilon, T) }{\varepsilon}
\ \ \ \ \ \ (\mbox{by hypothesis \emph{1} and \emph{2} and Theorem \ref{Serge Lang})}\\
& = & \displaystyle  \lim_{T\rightarrow+\infty}
\lim_{\varepsilon \rightarrow 0} \displaystyle \inf_{T^\prime \geq T} A(\varepsilon, T^\prime)\\
& = & \displaystyle  \lim_{T\rightarrow+\infty}
\lim_{\varepsilon \rightarrow 0} \lim_{n \rightarrow +\infty} A(\varepsilon, T^\prime_n)
\ \ \ \ \ \ (\mbox{by hypothesis \emph{3}) }\\
& = & \displaystyle  \lim_{T\rightarrow+\infty}
\lim_{n \rightarrow +\infty} \lim_{\varepsilon \rightarrow 0} A(\varepsilon, T^\prime_n)
\ \ \ \ \ \ (\mbox{by hypothesis \emph{3} and Theorem \ref{Serge Lang}) }\\
& = & \displaystyle  \lim_{T\rightarrow+\infty}
\inf_{T^\prime \geq T}  \lim_{\varepsilon \rightarrow 0} A(\varepsilon, T^\prime)
\ \ \ \ \ \ (\mbox{by hypothesis \emph{3})}\\
& = & \displaystyle  \lim_{T\rightarrow+\infty}
\inf_{T^\prime \geq T}  \lim_{\varepsilon \rightarrow 0} \displaystyle
\int_{a}^{T^\prime} \frac{L(t, x_{\ast}^{\sigma}(t) + \varepsilon
p^{\sigma}(t), x_{\ast}^{\Delta}(t) + \varepsilon p^{\Delta}(t))
- L(t, x_{\ast}^{\sigma}(t), x_{\ast}^{\Delta}(t))}{\varepsilon} \Delta t\\
& = & \displaystyle  \lim_{T\rightarrow+\infty}  \inf_{T^\prime \geq
T} \displaystyle \int_{a}^{T^\prime} \lim_{\varepsilon \rightarrow
0} \frac{L(t, x_{\ast}^{\sigma}(t) + \varepsilon p^{\sigma}(t),
x_{\ast}^{\Delta}(t) + \varepsilon p^{\Delta}(t))
- L(t, x_{\ast}^{\sigma}(t), x_{\ast}^{\Delta}(t))}{\varepsilon} \Delta t\\
& = & \displaystyle  \lim_{T\rightarrow+\infty}  \inf_{T^\prime \geq
T} \displaystyle \int_{a}^{T^\prime} \left[\partial_2 L(t,
x_{\ast}^{\sigma}(t), x_{\ast}^{\Delta}(t)) p^{\sigma}(t)+
\partial_3 L(t, x_{\ast}^{\sigma}(t), x_{\ast}^{\Delta}(t)) p^{\Delta}(t)\right]  \Delta t\\
& = & \displaystyle  \lim_{T\rightarrow+\infty}  \inf_{T^\prime \geq
T} \displaystyle \{\int_{a}^{T^\prime} \left[\partial_2 L(t,
x_{\ast}^{\sigma}(t), x_{\ast}^{\Delta}(t)) p^{\sigma}(t)-
\frac{\Delta}{\Delta t} \partial_3 L(t, x_{\ast}^{\sigma}(t), x_{\ast}^{\Delta}(t)) p^{\sigma}(t) \right] \Delta t\\
& &  \ \ \ \ +
\partial_3 L(T^\prime, x_{\ast}^{\sigma}(T^\prime), x_{\ast}^{\Delta}(T^\prime)) p(T^\prime)\}
\ \ \ \ \ \ (\mbox{by item \emph{8} of  Theorem \ref{propriedades nabla integral} and $p(a)=0$)}.\\
\end{array}
$$
Hence we may conclude that
\begin{equation}\label{eq 1}
\displaystyle  \lim_{T\rightarrow+\infty}  \inf_{T^\prime \geq
T}\left\{ \displaystyle \int_{a}^{T^\prime} \left(\partial_2
L(\bullet)- \frac{\Delta}{\Delta t} \partial_3 L(\bullet)\right)
p^{\sigma}(t)  \Delta t +
\partial_3 L(T^\prime, x_{\ast}^{\sigma}(T^\prime),
x_{\ast}^{\Delta}(T^\prime)) p(T^\prime)\right\}=0
\end{equation}
\\
where we denote $(\bullet):=(t, x_{\ast}^{\sigma}(t),
x_{\ast}^{\Delta}(t))$.
Since $(\ref{eq 1})$ holds for all $p \in C^1_{rd}$ such that
$p(a)=0$, then, in particular, $(\ref{eq 1})$ holds for $p$
satisfying also $p(T^\prime)=0$. Therefore,
\begin{equation}\label{eq 2}
\displaystyle  \lim_{T\rightarrow+\infty}  \inf_{T^\prime \geq T}
\displaystyle \int_{a}^{T^\prime} \left(\partial_2 L(\bullet)-
\frac{\Delta}{\Delta t} \partial_3 L(\bullet)\right) p^{\sigma}(t)
\Delta t =0.
\end{equation}
Denote
$$\partial_2 L=\left(\frac{\partial L}{\partial x_1},
\cdots,\frac{\partial L}{\partial x_n}\right)  \ \ \ \ \mbox{and}\ \ \ \
\partial_3 L=\left(\frac{\partial L}{\partial y_1},
\cdots,\frac{\partial L}{\partial y_n}\right).$$
Choosing $p=(p_1, \cdots, p_n)$ such that $p_2,\ldots, p_n\equiv 0$,
we obtain from (\ref{eq 2}) that
$$
\displaystyle  \lim_{T\rightarrow+\infty}  \inf_{T^\prime \geq T}
\displaystyle \int_{a}^{T^\prime} \left(\frac{\partial L}{\partial
x_1}(\bullet)- \frac{\Delta}{\Delta t} \frac{\partial L}{\partial
y_1}(\bullet)\right) p_1^{\sigma}(t)  \Delta t =0.
$$
Using Lemma~\ref{Fundamental lemma} we conclude that
$$
\frac{\partial L}{\partial x_1}(\bullet)- \frac{\Delta}{\Delta t}
\frac{\partial L}{\partial y_1}(\bullet)=0,  \ \ \ \ \ \forall t \in
[a,+\infty[.
$$
This procedure can be similarly used for the other coordinates and
we obtain the Euler-Lagrange equations:
$$
\frac{\partial L}{\partial x_i}(\bullet)- \frac{\Delta}{\Delta t}
\frac{\partial L}{\partial y_i}(\bullet)=0,  \ \ \ \ \ \forall t \in
[a,+\infty[
$$
for $i=2, 3, \ldots, n$.
These $n$ Euler-Lagrange equations can be written in the condensed
form
\begin{equation}
\label{E-L equation 1}
\partial_2 L(t, x_{\ast}^{\sigma}(t), x_{\ast}^{\Delta}(t))-
\frac{\Delta}{\Delta t} \partial_3 L(t, x_{\ast}^{\sigma}(t),
x_{\ast}^{\Delta}(t))=0, \ \ \ \ \ \forall t \in [a,+\infty[.
\end{equation}
The Euler-Lagrange equation (\ref{E-L equation 1}) and equation
(\ref{eq 1}) shows that
\begin{equation}
\label{tranversality 1}
\displaystyle  \lim_{T\rightarrow+\infty}  \inf_{T^\prime \geq
T}\partial_3 L(T^\prime, x_{\ast}^{\sigma}(T^\prime),
x_{\ast}^{\Delta}(T^\prime)) p(T^\prime)=0.
\end{equation}
Next we consider a special curve $p$ defined by
$$p(t)=\alpha(t) x_{\ast}(t), \ \ \ \ \forall t \in [a, +\infty[$$
where $\alpha: [a, +\infty[ \rightarrow \mathbb{R}$ is a $C^1_{rd}$
function satisfying $\alpha (a)=0$ and there exists $T_0\in
\mathbb{T}$ such that $\alpha(t)=\beta \in
\mathbb{R}\setminus\{0\}$, for all $t> T_0$.
By equation (\ref{tranversality 1}) we conclude that
$$
\begin{array}{lcl}
0&= & \displaystyle  \lim_{T\rightarrow+\infty}  \inf_{T^\prime \geq T}\partial_3
L(T^\prime, x_{\ast}^{\sigma}(T^\prime), x_{\ast}^{\Delta}(T^\prime)) \alpha(T^\prime)x_{\ast}(T^\prime)\\
&= &  \displaystyle  \lim_{T\rightarrow+\infty}  \inf_{T^\prime \geq T}\partial_3
L(T^\prime, x_{\ast}^{\sigma}(T^\prime), x_{\ast}^{\Delta}(T^\prime)) \beta x_{\ast}(T^\prime) \\
\end{array}
$$
and therefore
$$
\displaystyle  \lim_{T\rightarrow+\infty}  \inf_{T^\prime \geq T}\partial_3
L(T^\prime, x_{\ast}^{\sigma}(T^\prime), x_{\ast}^{\Delta}(T^\prime)) )x_{\ast}(T^\prime)=0,
$$
proving that $x_{\ast}$ satisfies the transversality condition
(\ref{tranversality}).
\end{proof}


\section{Illustrative examples}

\begin{ex}
\label{ex:neg}
Consider the problem
\begin{equation*}
\displaystyle \int_{a}^{+\infty} \left[(x^\sigma(t)-\alpha)^2
+\beta x^\Delta(t)\right]\ \Delta t \longrightarrow \max \, ,
\quad x(a)=\alpha \, ,
\end{equation*}
where $\alpha>0$ and $\beta>0$. Since
$L(t,x^\sigma,x^\Delta)=(x^\sigma-\alpha)^2+\beta x^\Delta$
we have
$\partial_2L=2(x^\sigma-\alpha)$ and
$\partial_3L=\beta$.
From Theorem~\ref{main result} the Euler-Lagrange equation is
\begin{equation*}
2(x^\sigma(t)-\alpha)=0, \ \forall t \in [a,+\infty[ \, ,
\end{equation*}
that is, $x^\sigma(t)=\alpha, \ \forall t \in [a,+\infty[$. As $x(a)=\alpha$, we have
$x(t)=\alpha, \ \forall t \in [a,+\infty[$. Observe that the transversality condition
\eqref{tranversality} is not satisfied, because
\begin{equation*}
\lim_{T\rightarrow+\infty} \inf_{T^\prime \geq T}\partial_3 L
(T^\prime, x^{\sigma}(T^\prime), x^{\Delta}(T^\prime))
x(T^\prime)=\beta \alpha>0.
\end{equation*}
The reason why we obtain this contradiction is that assumptions of
Theorem~\ref{main result} are violated. Consider $L(t, x^{\sigma}(t)
+ \varepsilon p^{\sigma}(t), x^{\Delta}(t) + \varepsilon
p^{\Delta}(t)) - L(t, x^{\sigma}(t), x^{\Delta}(t))$. Substituting
$x(t)=\alpha$ into it, we have
\begin{equation*}
L(t, x^{\sigma}(t) + \varepsilon p^{\sigma}(t), x^{\Delta}(t) +
\varepsilon p^{\Delta}(t)) - L(t, x^{\sigma}(t),
x^{\Delta}(t))=\varepsilon^2(p^\sigma(t))^2+\beta\varepsilon
 p^\Delta(t).
\end{equation*}
Hence,
\begin{equation*}
\frac{V(\varepsilon,T)}{\varepsilon}=\inf_{T^\prime \geq
T}\int_{a}^{T^\prime}\frac{\varepsilon^2(p^\sigma(t))^2+\beta\varepsilon
 p^\Delta(t)}{\varepsilon}\Delta
t=\inf_{T^\prime \geq
T}\int_{a}^{T^\prime}\left(\varepsilon(p^\sigma(t))^2+\beta
 p^\Delta(t)\right)\Delta t.
\end{equation*}
Choosing $p$ such that $p(a)=0$ and there exists $T_0>a$ so that
$p(t)=c>0$ for $t\geq T_0$, we obtain
\begin{equation*}
\frac{V(\varepsilon,T)}{\varepsilon}=\int_{a}^{T}\varepsilon
c^2\Delta t+\beta c.
\end{equation*}
Therefore, assumption~2 of Theorem~\ref{main result} is violated.
\end{ex}

\begin{ex}
\label{ex:pos}
Consider the problem
\begin{equation}
\label{problem:ex2}
\displaystyle \int_{0}^{+\infty} -\sqrt{1+ (x^\Delta(t))^2}\ \Delta t \longrightarrow \max \, ,
\quad x(0)=A \, .
\end{equation}
Since
\begin{equation*}
L(t,x^\sigma,x^\Delta)=-\sqrt{1+ (x^\Delta)^2} \, ,
\end{equation*}
we have
\begin{equation*}
\partial_3L=-\frac{x^{\Delta}}{\sqrt{1+(x^{\Delta})^2}}, \quad \partial_2L=0.
\end{equation*}
Using the Euler-Lagrange equation \eqref{E-L equation} we obtain
\begin{equation*}
\tilde{x}^{\Delta}(t)=d\sqrt{1+(\tilde{x}^{\Delta}(t))^2}, \ \forall t \in [0,+\infty[
\end{equation*}
for some $d\in \mathbb{R}$. Solving the latter equation with initial
condition $x(0)=A$ we obtain $\tilde{x}(t)=\alpha t+A$, where
$\alpha \in \mathbb{R}$. In order to determine $\alpha$ we use the
tranversality condition \eqref{tranversality}, which can be
rewritten as
\begin{equation*}
\lim_{T\rightarrow+\infty} \inf_{T^\prime \geq
T}-\frac{\alpha}{\sqrt{1+\alpha^{2}}}(\alpha T^\prime+A)=0.
\end{equation*}
Hence, $\alpha=0$ and $x_{\ast}(t)=A$ is a \emph{candidate} to be a maximizer.
Observe that
\begin{multline*}
\lim_{T\rightarrow+\infty}\inf_{T^\prime \geq
T}\int_{0}^{T^\prime}[L(t, x^{\sigma}(t), x^{\Delta}(t))
- L(t,x_{\ast}^{\sigma}(t), x_{\ast}^{\Delta}(t)]\Delta t\\
= \lim_{T\rightarrow+\infty}\inf_{T^\prime \geq
T}\int_{0}^{T^\prime} \left(
1 - \sqrt{1 + (x^\Delta(t))^2}\right) \Delta t \le 0
\end{multline*}
for every admissible $x$. Therefore, by Definition~\ref{def:weakMax}
we have that $x_{\ast}(t)=A$ is indeed the solution to problem \eqref{problem:ex2}.
\end{ex}


\section{Conclusion and future work}

We considered problems of the calculus of variations
on unbounded time scales. Main result provides
a new transversality condition. Examples illustrating
the application of the new necessary optimality
conditions are given in detail.
In the particular case $\mathbb{T} = \mathbb{Z}$
our transversality condition gives
the discrete time condition obtained by Michel in \cite{Michel};
for the continuous time case, \textrm{i.e.}, for $\mathbb{T} = \mathbb{R}$,
we obtain the result by Kamihigashi \cite{Kami}.
Recently, Okumura et al. \cite{Nitta et all} generalized
the results of Kamihigashi to higher order differential problems.
The question of obtaining necessary optimality conditions
that extend the results of \cite{Nitta et all}
to higher-order infinite horizon problems on time scales remains
an interesting open question. While clear that the Euler-Lagrange
equations proved in \cite{comRui:TS:Lisboa07,NataliaHigherOrderNabla}
remain valid in the infinite horizon case, the generalization
of our transversality condition \eqref{tranversality}
to higher-order variational problems on time scales
is a non-trivial question requiring further investigations.


\section*{Acknowledgments}

This work was partially supported by the R\&D unit
``Centre for Research on Optimization and Control'' (CEOC)
of the University of Aveiro,
cofinanced by the European Community Fund FEDER/POCI 2010.
Agnieszka Malinowska is on leave of absence
from Bia{\l}ystok University of Technology (BUT).
She was also supported by BUT,
via a project of the Polish Ministry of Science
and Higher Education ``Wsparcie miedzynarodowej mobilnosci
naukowcow''.



\end{document}